\newcommand{\mbf}{\mathbf}             
\newcommand{\mbb}{\mathbb}				   
\newcommand{\tr}{^{\textnormal{T}}}    
\newcommand{\abs}[1]{\left| #1 \right|}
\newcommand{\norm}[1]{\left\| #1 \right\|}
\newcommand{\st}{\textnormal{s.t.}\:}  
\newcommand{\alev}{a.e.\;}	            
\renewcommand{\hat}{\widehat}
\renewcommand{\tilde}{\widetilde}
\newcommand{\median}{\textnormal{median}}
\newcommand{\smallsum}{{\textstyle{\sum}}}
\renewcommand{\a}{\mathbf{a}}
\renewcommand{\d}{\mathbf{d}}
\newcommand{\f}{\mathbf{f}}
\renewcommand{\l}{\boldsymbol{\ell}}
\renewcommand{\u}{\mathbf{u}}
\renewcommand{\v}{\mathbf{v}}
\newcommand{\w}{\mathbf{w}}
\newcommand{\x}{\mathbf{x}}
\newcommand{\y}{\mathbf{y}}
\newcommand{\z}{\mathbf{z}}
\newtheorem{theorem}{Theorem}
\newtheorem{corollary}[theorem]{Corollary}
\theoremstyle{definition}
\newtheorem{definition}{Definition}
\begin{document}

\title{Enhancing interval observers for state estimation using constraints
\thanks{This work was funded by Novartis Pharmaceuticals as part of the Novartis-MIT Center for Continuous Manufacturing.}
}
\author{Stuart M. Harwood%
\and Paul I. Barton%
\thanks{Process Systems Engineering Laboratory, Massachusetts Institute of Technology, Cambridge, MA 02139 (\texttt{pib@mit.edu})}
}
\maketitle


\begin{abstract}
This work considers the problem of calculating an interval-valued state estimate for a nonlinear system subject to bounded inputs and measurement errors.
Such state estimators are often called interval observers.
Interval observers can be constructed using methods from reachability theory.
Recent advances in the construction of interval enclosures of reachable sets for nonlinear systems inspire the present work.
These advances can incorporate constraints on the states to produce tighter interval enclosures.
When applied to the state estimation problem, bounded-error measurements may be used as state constraints in these new theories.
The result is a method that is easily implementable and which generally produces better, tighter interval state estimates.
Furthermore, a novel linear programming-based method is proposed for calculating the observer gain, which must be tuned in practice.
In contrast with previous approaches, this method does not rely on special system structure.
The new approaches are demonstrated with numerical examples.
\end{abstract}

\section{Introduction}
\label{sec:intro}

This work considers the problem of bounded-error state estimation for nonlinear dynamic systems with continuous-time measurements.
The goal of state estimation is to determine or estimate the internal state of a real system.
While only certain states or functions of the states can be measured directly, a mathematical model of the system is available.
When the system is a dynamic one, a history of measurements is typically available.
Estimating the state using a dynamic model and measurements goes back to the Kalman filter \cite{kalman60}, where a more statistical estimate of the state is obtained.
More recently, work has focused on estimates in the form of guaranteed bounds on the state in the presence of bounded uncertainties and measurement errors \cite{efimovEA13,jaulin02,meslemEA10,moisanEA07,thabetEA14}.
This work continues in this direction.

Interval observers are a type of state estimator most closely related to the present work.
The approach to constructing interval observers taken in \cite{meslemEA10,meslemEA11,moisanEA05,moisanEA07} is similar to the theory that we will use in the proposed approach.
Underpinning these methods are theorems for general estimation of reachable sets.
These theories are in the vein of comparison theorems involving differential inequalities.
Such theorems have a long history, going back to ``M\"{u}ller's theorem'' \cite{walter},
which subsequently was generalized to control systems in \cite{harrison77}.
Recent work from \cite{harwoodEA15,harwoodEA16,harwoodEA17,scottEA13} develops these kinds of comparison theorems further.
In particular, they incorporate state constraints into the bounding theory in a fundamental way;
this information typically produces a tighter estimate of the reachable set without significant extra computational effort.

While the work in \cite{meslemEA10,meslemEA11,moisanEA05,moisanEA07} relies on the classic M\"{u}ller's theorem or the theory of cooperative systems, the present work depends on the more recent method for reachability analysis in \cite{harwoodEA17}.
When this theory is applied to state estimation, bounded error measurements take the role of state constraints, and tighter state estimates result.

Related literature includes work on general reachability analysis with constraints, which has been addressed in \cite{kurzhanskiEA06}.
In that work, the focus is on linear systems and ellipsoidal enclosures of the reachable sets.
Related work deals with control problems with state constraints \cite{bokanowskiEA10,kurzhanskiEA06_optimization}, in which  the theoretical basis of the approaches is on the Hamilton-Jacobi-Bellman partial differential equation (PDE);
the authors of \cite{kurzhanskiEA06_optimization} note that the solution of such a PDE is in general complicated for nonlinear systems.


The contributions and structure of this work are as follows.
Section~\ref{sec:prelim} discusses notation and formally states the problem of interest.
Section~\ref{sec:se} briefly discusses some of the ideas behind interval state estimates, and then proceeds to detail the proposed method.
Again, the novelty of the proposed state estimation method is the incorporation of measurements as constraints and applying recent advances in reachability analysis.
Further, a new procedure for calculating the ``gain'' of the observer system is discussed.
In contrast with previous work, this method does not rely on any special system structure such as cooperativity.
Section~\ref{sec:examples} demonstrates a numerical implementation of the new method for state estimation.
Comparing it with related methods, it is shown that better, tighter state estimates can be obtained with the new method.
Section~\ref{sec:conc} concludes with some final remarks.

\section{Problem statement and preliminaries}
\label{sec:prelim}

\subsection{Notation}
First, notation that is used in this work includes lowercase bold letters for vectors and uppercase bold letters for matrices.
The transposes of a vector $\v$ and matrix $\mbf{M}$ are denoted $\v \tr$ and $\mbf{M} \tr$, respectively.
The exception is that $\mbf{0}$ may denote either a matrix or vector of zeros, but it should be clear from context what the appropriate dimensions are.
Similarly, $\mbf{1}$ denotes a vector of ones whose dimension should be clear from context.
The $j^{th}$ component of a vector $\mbf{v}$ will be denoted ${v}_j$.
For a matrix $\mbf{M} \in \mbb{R}^{p \times n}$, the notation 
$\mbf{M} = [ \mbf{m}_i \tr ]$ 
may be used to emphasize that the $i^{th}$ row of $\mbf{M}$ is $\mbf{m}_i$, for $i \in \{1, \dots, p\}$.
Similarly, $\mbf{M} = [m_{i,j}]$ emphasizes that the element in the $i^{th}$ row and $j^{th}$ column is $m_{i,j}$.
Inequalities between vectors hold componentwise.
For $\v$, $\w \in \mbb{R}^n$ such that $\v \le \w$, $[\v, \w]$ denotes a nonempty interval in $\mbb{R}^n$.
For a set $Y \subset \mbb{R}^n$, denote the set of nonempty interval subsets of $Y$ by $\mbb{I}Y$.
%
The equivalence of norms on $\mbb{R}^n$ is used often;
when a statement or result holds for any choice of norm, it is denoted $\norm{\cdot}$.
In some cases, it is useful to reference a specific norm, in which case it is subscripted;
for instance, $\norm{ \cdot }_1$ denotes the 1-norm.
The dual norm of a norm $\norm{\cdot}$ is denoted $\norm{\cdot}_*$.
%
In a metric space, a neighborhood of a point $x$ is denoted $N(x)$ and refers to an open ball centered at $x$ with some nonzero radius.

\subsection{Problem statement}
\label{sec:prob}
The problem of interest is one in estimating the state of a continuous-time dynamic system;
in particular, we seek a rigorous enclosure of the states or a bounded error estimate.
The dynamic model of the system is defined by:
for positive integers $n_x,n_y,n_u$,
nonempty interval $T = [t_0,t_f] \subset \mbb{R}$,
$D_x \subset \mbb{R}^{n_x}$, 
$D_u \subset \mbb{R}^{n_u}$,
$\mbf{f} : T \times D_u \times D_x \to \mbb{R}^{n_x}$, and
$\mbf{C} \in \mbb{R}^{n_y \times n_x}$.
The system obeys the following equations:
\begin{subequations}
\label{eq:system}
\begin{align}
\label{eq:ivp}
\dot{\x}(t) &= \mbf{f}(t, \u(t), \x(t)), \quad \alev t \in T, \\
\label{eq:meas}
\y(t) &= \mbf{C} \x(t) + \v(t), \quad \forall t \in T,
\end{align}
\end{subequations}
where 
$\x(t)$ is the state of the system, 
$\y(t)$ is the measurement or output of the system, 
$\u(t)$ is a disturbance to the system, and 
$\v(t)$ is the measurement noise.

In this work, we exclusively consider absolutely continuous solutions $\x$ of the IVP in ODEs \eqref{eq:ivp}.
Further, we assume that it is known that
the initial conditions satisfy $\x(t_0) \in X_0 \in \mbb{I}D_x$,
$\v$ is measurable and $\v(t) \in V(t) \in \mbb{I}\mbb{R}^{n_y}$ for all $t \in T$,
$\u$ is measurable and $\u(t) \in U(t) \in \mbb{I}D_u$ for all $t \in T$,
where $V(t) \subset K_v$ and $U(t) \subset K_u$, for all $t \in T$, for compact $K_v$ and $K_u$.
In other words, interval enclosures of the possible initial conditions, measurement noise, and disturbance values are known.
A fundamental assumption on the dynamics $\f$ is a kind of local Lipschitz condition:
For any $\z \in D_x$, there exists a neighborhood $N(\z)$ and $\alpha \in L^1(T)$ such that for almost every $t \in T$ and every $\w \in U(t)$
\[
\norm{ \f(t,\w,\z_1) - \f(t,\w,\z_2) } \le \alpha(t) \norm{ \z_1 - \z_2 },
\]
for every $\z_1,\z_2 \in N(\z) \cap D_x$.

The challenge, then is to construct an interval $[\x^L(t_f),\x^U(t_f)]$ such that
\[
\x(t_f) \in [\x^L(t_f),\x^U(t_f)]
\]
for any possible realization of noise $\v$, disturbance $\u$, and initial condition.

For future reference, since $V$ and $U$ are interval-valued, we can write
$V(t) = [\v^L(t),\v^U(t)]$ and $U(t) = [\u^L(t),\u^U(t)]$ for appropriate functions.

\section{State estimation}
\label{sec:se}

\subsection{Motivation}

Previous work on bounded-error state estimation includes interval observers;
we refer to \cite{efimovEA16} for a recent review of such work and some related topics.
The essence of this type of method is easily understood for a linear system.
Consider 
\begin{align}
\dot\x(t) &= \mbf{A}\x(t), \\
\y(t) &= \mbf{C}\x(t) + \v(t).
\end{align}
Then for any matrix $\mbf{L} \in \mbb{R}^{n_x \times n_y}$, we can write
\[
\dot\x(t) = (\mbf{A} - \mbf{LC}) \x(t) + \mbf{LC}\x(t),
\]
and subsequently
\[
\dot\x(t) = (\mbf{A} - \mbf{LC}) \x(t) - \mbf{L}\v(t) + \mbf{L}\y(t).
\]
The theory of interval observers often focuses on the case that the ``gain matrix'' $\mbf{L}$ is chosen so that 
$\mbf{A} - \mbf{LC}$ has positive off-diagonal components, and
that $\norm{\w}_{\infty} \le v^N$, for all $\w \in V(t)$, for all $t$
(so that 
$\mbf{L}\v(t) \le \norm{\mbf{L}\v(t)}_{\infty} \mbf{1} \le \norm{\mbf{L}}_{\infty}v^N \mbf{1}$,
implying
$-\norm{\mbf{L}}_{\infty}v^N \mbf{1} \le \mbf{L}\v(t) \le \norm{\mbf{L}}_{\infty}v^N\mbf{1}$).
In this case one can write
\begin{align*}
\dot\x^L(t) &= (\mbf{A} - \mbf{LC}) \x^L(t) - \norm{\mbf{L}}_{\infty}v^N \mbf{1} + \mbf{L}\y(t), \\
\dot\x^U(t) &= (\mbf{A} - \mbf{LC}) \x^U(t) + \norm{\mbf{L}}_{\infty}v^N \mbf{1} + \mbf{L}\y(t).
\end{align*}
Then assuming $X_0 \subset [\x^L(t_0),\x^U(t_0)]$, the theory of monotone dynamic systems ensures that for any initial condition and realization of measurement noise, $\x(t) \in [\x^L(t),\x^U(t)]$, for all $t \in T$.

Our approach is to extend this idea with more general theorems for reachability analysis based on the theory of differential inequalities,  and in particular, to leverage the recent developments that take advantage of constraint information to tighten the estimates of the reachable sets.
To this end, we interpret the state estimation problem as a reachability problem for an IVP in \emph{constrained} ODEs.
This has been considered most recently in \cite{harwoodEA17}.
Similar techniques from \cite{harwoodEA15,harwoodEA16,scottEA13} also apply;
those articles use ``a priori enclosures'' of the reachable set, which are in effect constraints, although they do not necessarily exclude any solutions.
As demonstrated in  \cite{harwoodEA17}, these techniques can be applied to constrained systems, if one is only interested in solutions that also satisfy the constraints.

Thus, the proposed approach is to apply reachability analysis to the system
\begin{align*}
\dot{\x}(t) &= \mbf{f}(t, \u(t), \x(t)) - \mbf{LC} \x(t) - \mbf{L}\v(t) + \mbf{L}\y(t), 
   \quad \alev t \in T, \\
\x(t) &\in X_c(t) \quad \forall t \in T,
\end{align*}
for some choice of matrix $\mbf{L}$, and 
constraint mapping $X_c$ defined by 
$X_c(t) = \{ \z: \y(t) - \v^U(t) \le \mbf{C}\z \le \y(t) - \v^L(t)\}$.

Recent developments in the design of interval observers from \cite{efimovEA13,thabetEA14} allow the gain matrix $\mbf{L}$ to vary, depending on time and measurements.
There is no significant hurdle to incorporating such a generalization in the method that follows;
we choose not to do so in order to keep notation simple and to highlight the essence of our contribution.

\subsection{Proposed method}
\label{sec:method}

To state the method, we must introduce a few definitions and operations.
Central to the method for estimating reachable sets that we will use, from \cite{scottEA13}, is the ability to overestimate or bound the dynamics. 
We will do so with an inclusion function constructed using principles from interval arithmetic.
See \cite{moore_etal} for an introduction to interval arithmetic and inclusion functions.
As in the discussion above, the dynamics we care about are for a modified system depending on the gain matrix $\mbf{L}$.
%
\begin{definition}
\label{defn:inclusionfun}
For $\mbf{L} \in \mbb{R}^{n_x \times n_y}$,
define 
\begin{equation}
\notag
\hat\f : (t, \u, \z, \v; \mbf{L}) \mapsto \f(t,\u,\z) - \mbf{LC} \z - \mbf{L}\v,
\end{equation}
and let $\hat\f^L$ and $\hat\f^U$ be the endpoints of an 
inclusion monotonic interval extension of
$\hat\f(\cdot,\cdot,\cdot,\cdot; \mbf{L})$.
\end{definition}
The critical property of an inclusion monotonic interval extension is that it is an inclusion function;
that is
\[
\hat\f^L([t,t],U', Z', V'; \mbf{L}) \le
\hat\f(t,      \u, \z, \v; \mbf{L}) \le
\hat\f^U([t,t],U', Z', V'; \mbf{L})
\]
for any 
$U' \in \mbb{I}D_u$,
$Z' \in \mbb{I}D_x$,
$V' \in \mbb{IR}^{n_y}$,
and $(t,\u,\z,\v) \in T \times U' \times Z' \times V'$.
Inclusion functions for a broad class of functions can be automatically and cheaply evaluated with a number of numerical libraries implementing interval arithmetic,
for instance, MC++ \cite{mc++}, INTLAB \cite{rump99}, or PROFIL \cite{knuppel94}.
The proposed method relies on these automatically computable inclusion functions, which reduces the amount of analysis that must be performed compared to other work.
For instance,  the work in \cite{meslemEA10,meslemEA11} requires analysis of the dynamics to derive a hybrid automata which is then used to construct the state estimate.

Since it will be useful later, we mention that the \emph{natural interval extension} of a function is an inclusion monotonic interval extension.
For a linear function $g : \z \mapsto \a\tr\z$, the natural interval extension $[g^L, g^U]$ is computed by applying the rules of interval arithmetic:
\begin{align*}
[v_i^L, v_i^U] &= 
\begin{cases}
[a_i z_i^L, a_i z_i^U] & \text{if } a_i\ge 0, \\
[a_i z_i^U, a_i z_i^L] & \text{else},
\end{cases}\\
[g^L, g^U] &= [\smallsum_i v_i^L, \smallsum_i v_i^U ].
\end{align*}

We will also require the following definition.
If $\v \le \w$, then $B_i^L(\v,\w)$ returns the $i^{th}$ lower face of the interval $[\v,\w]$.
If $[\v,\w]$ is empty, then a nonempty interval $[\hat\v,\hat\w]$ is constructed and the faces of that interval are returned.
\begin{definition}
Define for each $i \in \{ 1,2,\dots,n_x \}$
\begin{align*}
B_i^L &: (\v,\w) \mapsto \{ \z \in [\hat\v,\hat\w] : z_i = \hat{v}_i \}, \\ 
B_i^U &: (\v,\w) \mapsto \{ \z \in [\hat\v,\hat\w] : z_i = \hat{w}_i \},
\end{align*}
where $\hat\v$, $\hat\w$ are given componentwise by
$\hat{v}_j = \min\{ v_j, (v_j + w_j)/2 \}$ and
$\hat{w}_j = \max\{ w_j, (v_j + w_j)/2 \}$.
\end{definition}
%

The operation defined in Algorithm~\ref{alg:tighten} is required.
Originally from Definition 4 in \cite{scottEA13}, this algorithm defines the operation $I_t$, which tightens an interval $[\v,\w]$ by excluding points which cannot satisfy a given set of linear constraints $\mbf{M} \z \le \d$.
Specifically, the discussion in \S5.2 of \cite{scottEA13} establishes that the tightened interval $I_t([\v,\w], \d; \mbf{M})$ satisfies
\[
\{ \z \in [\v,\w] : \mbf{M} \z \le \d \} \subset I_t([\v,\w], \d; \mbf{M}) \subset [\v,\w].
\]

\begin{algorithm}
\caption{Definition of the interval-tightening operator $I_t$}
\label{alg:tighten}
\begin{algorithmic}
\REQUIRE 
positive integers $n_m,n$,
$\mbf{M} = [m_{i,j}] \in \mbb{R}^{n_m \times n}$, 
$\d \in \mbb{R}^{n_m}$, 
$(\v,\w) \in \mbb{R}^{n} \times \mbb{R}^{n}$, $\v \le \w$
\STATE 
	$(\hat{\v}, \hat{\w}) \gets (\v,\w)$
\FOR{ $i \in \{1, \dots, n_m\}$ }
\FOR{ $j \in \{1, \dots, n\}$ }
	\IF{ $m_{i,j} \neq 0$ }
		\STATE{
		$\gamma \gets \median{} \left\{ \hat{v}_j, \hat{w}_j, \sfrac{1}{m_{i,j}} \big( d_i + \sum_{k \neq j} \max\{ -m_{i,k} \hat{v}_k, -m_{i,k} \hat{w}_k \}  \big) \right\}$
		\IF{ $m_{i,j} > 0$ }
			\STATE{ $\hat{w}_j \gets \gamma$ }
		\ENDIF
		\IF{ $m_{i,j} < 0$ }
			\STATE{ $\hat{v}_j \gets \gamma$ }
		\ENDIF
		}
	\ENDIF
\ENDFOR
\ENDFOR
\RETURN $I_t([\v,\w], \d; \mbf{M}) \gets [\hat{\v}, \hat{\w}]$
\end{algorithmic}
\end{algorithm}

With the definition of the interval tightening operator, we can define an operation specific to our problem which tightens an interval based on the constraints 
$\mbf{C}\z \le \y(t) - \v^L(t)$,
$\mbf{C}\z \ge \y(t) - \v^U(t)$.

\begin{definition}
Let 
\[
\d : (t,\y) \mapsto \begin{bmatrix} \y - \v^L(t) \\ -\y + \v^U(t) \end{bmatrix}, \qquad
\mbf{M} = \begin{bmatrix} \mbf{C} \\ -\mbf{C} \end{bmatrix}.
\]
Define 
\[
I_c : ([\v,\w],t,\y) \mapsto I_t([\v,\w],\d(t,\y);\mbf{M}).
\]
\end{definition}

We can now state the specific method for constructing a state estimator.
The main difference between this method and a classic comparison theorem for reachability (see \cite{harrison77}) is the application of the $I_c$ operator.
While the classic method would overstimate, for instance, $\hat{f}_i$ on the set 
$\{ \z \in [\x^L(t), \x^U(t)] : z_i = x_i^U(t) \}$
(the $i^{th}$ upper face of the interval),
the proposed method applies $I_c$ to that set, and overestimates $\hat{f}_i$ on the resulting (no larger) interval.
\begin{theorem}
\label{thm:method}
For any $\mbf{L} \in \mbb{R}^{n_x \times n_y}$, 
let $(\x^L,\x^U)$ be absolutely continuous and satisfy
\begin{align}
\notag
\dot{x}_i^L(t) &= \hat{f}_i^L([t,t], U(t), I_c(B_i^L(\x^L(t),\x^U(t)),t,\y(t)), V(t); \mbf{L}) + \mbf{L}\y(t), \quad \alev t \in T, \\
\notag
\dot{x}_i^U(t) &= \hat{f}_i^U([t,t], U(t), I_c(B_i^U(\x^L(t),\x^U(t)),t,\y(t)), V(t); \mbf{L}) + \mbf{L}\y(t), \quad \alev t \in T,
\end{align}
for each $i \in \{1,2,\dots,n_x\}$, and with initial conditions
$[\x^L(t_0),\x^U(t_0)] = X_0$.
Then $\x(t) \in [\x^L(t),\x^U(t)]$ for all $t \in T$;
that is, $[\x^L(t),\x^U(t)]$ is an interval estimate of $\x(t)$ for all $t$.
\end{theorem}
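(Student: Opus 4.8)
The plan is to recognize Theorem~\ref{thm:method} as an instance of the comparison theorem for reachability of \emph{constrained} ODEs from \cite{harwoodEA17}, applied to the augmented system $\dot\z(t) = \hat\f(t,\u(t),\z(t),\v(t);\mbf{L}) + \mbf{L}\y(t)$ subject to $\z(t) \in X_c(t)$. The first step is purely algebraic: given any valid trajectory $\x$ with noise $\v$ and disturbance $\u$, substituting the definitions of $\hat\f$ and of $\y$ gives
\[
\hat\f(t,\u(t),\x(t),\v(t);\mbf{L}) + \mbf{L}\y(t) = \f(t,\u(t),\x(t)) - \mbf{LC}\x(t) - \mbf{L}\v(t) + \mbf{L}(\mbf{C}\x(t)+\v(t)) = \f(t,\u(t),\x(t)) = \dot\x(t),
\]
so $\x$ solves the augmented ODE a.e.; and since $\v(t) \in V(t) = [\v^L(t),\v^U(t)]$, the identity $\mbf{C}\x(t) = \y(t) - \v(t)$ yields $\y(t) - \v^U(t) \le \mbf{C}\x(t) \le \y(t) - \v^L(t)$, i.e. $\x(t) \in X_c(t)$ for all $t$. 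Thus every trajectory we must enclose is a constrained solution of the augmented system starting in $X_0$, and it suffices to show the bounding system $(\x^L,\x^U)$ encloses all such solutions.

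Next I would carry out the differential-inequality (tangency) argument that is the heart of the comparison theorem. Stack the two-sided errors into $\mbf{e}(t) \in \mbb{R}^{2n_x}$ with $e_i(t) = x_i^U(t) - x_i(t)$ and $e_{n_x+i}(t) = x_i(t) - x_i^L(t)$ for $i \in \{1,\dots,n_x\}$; then $\mbf{e}$ is absolutely continuous, $\mbf{e}(t_0) \ge \mbf{0}$ because $\x(t_0) \in X_0 = [\x^L(t_0),\x^U(t_0)]$, and the claim $\x(t) \in [\x^L(t),\x^U(t)]$ for all $t$ is exactly $\mbf{e}(t) \ge \mbf{0}$ on $T$. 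The key local property to establish is: for a.e. $t \in T$ and each $k \in \{1,\dots,2n_x\}$, if $\mbf{e}(t) \ge \mbf{0}$ and $e_k(t) = 0$ then $\dot e_k(t) \ge 0$. Consider $k = i \le n_x$ (the lower-bound case is symmetric): $\mbf{e}(t) \ge \mbf{0}$ means $\x(t) \in [\x^L(t),\x^U(t)]$, and $e_i(t) = 0$ means $x_i(t) = x_i^U(t)$, so $\x(t) \in B_i^U(\x^L(t),\x^U(t))$ (the interval is genuine when $\mbf{e}(t) \ge \mbf{0}$, so the $\hat\v,\hat\w$ repair in the definition of $B_i^U$ is inactive); combined with $\x(t) \in X_c(t)$, i.e. $\mbf{M}\x(t) \le \d(t,\y(t))$, the tightening guarantee for $I_t$ (hence for $I_c$) gives $\x(t) \in I_c(B_i^U(\x^L(t),\x^U(t)),t,\y(t))$. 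Inclusion monotonicity of $(\hat\f^L,\hat\f^U)$ then yields $\hat{f}_i(t,\u(t),\x(t),\v(t);\mbf{L}) \le \hat{f}_i^U([t,t],U(t),I_c(B_i^U(\x^L(t),\x^U(t)),t,\y(t)),V(t);\mbf{L})$, and adding $(\mbf{L}\y(t))_i$ to both sides gives $\dot x_i(t) \le \dot x_i^U(t)$, i.e. $\dot e_i(t) \ge 0$, at the (a.e.) times where $\dot\x$ and $\dot\x^U$ exist and satisfy their respective ODEs.

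Finally I would invoke a comparison theorem to pass from this tangency condition together with $\mbf{e}(t_0) \ge \mbf{0}$ to $\mbf{e}(t) \ge \mbf{0}$ on all of $T$; this is precisely the role played by the reachability result of \cite{harwoodEA17} (in the lineage of M\"uller's theorem \cite{walter} and \cite{harrison77,scottEA13}), whose hypotheses I would check. The local Lipschitz assumption on $\f$ from \S\ref{sec:prob} transfers to the augmented dynamics because the added term $-\mbf{LC}\z - \mbf{L}\v$ is globally Lipschitz in $\z$ and the remaining term $\mbf{L}\y(\cdot) = \mbf{L}(\mbf{C}\x(\cdot)+\v(\cdot))$ is measurable, locally integrable, and independent of $\z$; the inputs $U,V$ are interval-valued and uniformly bounded; and $X_c$ has exactly the linear-inequality form $\{\z : \mbf{M}\z \le \d(t,\y(t))\}$ to which $I_c$ applies. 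I expect the main obstacle to be this underlying comparison step rather than the bookkeeping: making it rigorous requires the usual measure-theoretic care (the tangency inequality must be verified on a set of $t$ of full measure, and the classical ``first time the enclosure is violated'' contradiction must be set up with an auxiliary perturbation so that the nonstrict inequality $\dot e_i \ge 0$ can be propagated), and it is here that the Lipschitz hypothesis is genuinely used. Granting the comparison result of \cite{harwoodEA17}, the remaining work is the verification above, after which $\mbf{e}(t) \ge \mbf{0}$, equivalently $\x(t) \in [\x^L(t),\x^U(t)]$, holds for all $t \in T$.
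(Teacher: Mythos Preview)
Your proposal is correct and follows essentially the same approach as the paper: recognize that the true trajectory $\x$ satisfies both the augmented dynamics $\dot\z = \hat\f(t,\u,\z,\v;\mbf{L}) + \mbf{L}\y(t)$ and the linear constraints defining $X_c(t)$, and then invoke an established bounding result for constrained reachable sets. The only differences are cosmetic: the paper's proof is a brief black-box appeal to \cite[\S5.2]{scottEA13} (rather than \cite{harwoodEA17}) and does not unpack the tangency/differential-inequality argument you sketch, nor does it spell out the algebraic check that $\x$ solves the augmented system and lies in $X_c(t)$.
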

\begin{proof}
As mentioned earlier, the claim follows by applying an established method for estimating reachable sets for the system
\[
\dot{\x}(t) = \mbf{f}(t, \u(t), \x(t)) - \mbf{LC} \x(t) - \mbf{L}\v(t) + \mbf{L}\y(t), \quad \alev t \in T,
\]
where $\x(t_0) \in X_0$, $(\u(t),\v(t)) \in U(t) \times V(t)$.
The specific method is from \cite[\S5.2]{scottEA13}.
With the assumptions on the problem setting in \S\ref{sec:prob} and the definitions of the dynamics defining $(\x^L,\x^U)$, all the assumptions and hypotheses of the method from \cite{scottEA13} are satisfied, and the conclusion holds that $\x(t) \in [\x^L(t),\x^U(t)]$ for any solution $\x$ satisfying
$\y(t) - \v^U(t) \le \mbf{C}\x(t) \le \y(t) - \v^L(t)$.
\end{proof}

We note that Theorem~\ref{thm:method} does not guarantee the \emph{existence} of $\x^L$ and $\x^U$;
it provides a construction that, if successful, yields an interval estimate.
Conditions that we have not stated explicitly,
such as continuity of $\hat\f^L$ and $\hat\f^U$ and measurability of $\v^L$, $\v^U$, $\u^L$, $\u^U$ 
(defining the set-valued mappings $V$ and $U$),
may be required to apply standard existence results for the solutions of IVPs in ODEs.
Perhaps more important are the typically stronger conditions that guarantee the applicability of numerical methods for the solution of IVPs in ODEs;
see, for instance, Theorem 1.1 of Section~1.4 of \cite{lambert}.
For example, in the numerical examples that follow, we use measurements $\y$ that are a continuous function on $T$.

\subsection*{Comparisons}

In \S\ref{sec:examples}, we will compare the proposed method established above against its variants and others from the literature.
The method from Theorem~\ref{thm:method} with $\mbf{L} = \mbf{0}$ is called the ``No Measurements'' method;
while it still uses measurement information as constraints, it does not use the measurement values $\y(t)$ directly.
Meanwhile, the ``No Constraints'' method is a variant of the proposed method that does not use the constraint information;
that is, the endpoints of the interval estimate satisfy the differential equations
\begin{align}
\notag
\dot{x}_i^L(t) &= \hat{f}_i^L([t,t], U(t), B_i^L(\x^L(t),\x^U(t)), V(t); \mbf{L}) + \mbf{L}\y(t), \quad \forall i,\\
\notag
\dot{x}_i^U(t) &= \hat{f}_i^U([t,t], U(t), B_i^U(\x^L(t),\x^U(t)), V(t); \mbf{L}) + \mbf{L}\y(t), \quad \forall i.
\end{align}
Note that application of the operation $I_c$ has been omitted.

\subsection{Linear stability analysis and automatic gain calculation}

Methods from the literature for calculating the gain matrix $\mbf{L}$ often focus on the case that the system is linear, e.g. $\dot{\x} = \mbf{A}\x$.
The methods often involve finding a gain matrix $\mbf{L}$ so that $\mbf{A} - \mbf{LC}$ has nonnegative off-diagonal components;
a matrix with this last property is often called a Metzler matrix.
Consequently, the theory of cooperative or monotone systems can be applied to derive interval estimates.

In the present section, our goal is to state a method for calculating the gain matrix that does not depend on the system being cooperative.
We begin with a fundamental result which will motivate the method;
we show that the No Constraints method with a gain matrix satisfying certain constraints will produce an asymptotically exact interval estimate for an idealized linear system.

\begin{theorem}
\label{thm:LinearStability}
Suppose that the system of interest has a homogeneous, linear time invariant form, with exact measurements:
\begin{align}
\notag
\dot{\x}(t) &= \mbf{A}\x(t),\quad \alev t, \\
\notag
\y(t) &= \mbf{C}\x(t), \quad \forall t.
\end{align}
Consider the No Constraints method for calculating an interval state estimate;
assume that the natural interval extension is used to calculate the inclusion monotonic interval extension function required in Definition~\ref{defn:inclusionfun}.
Let the columns of $\mbf{C}$ be $\mbf{C}_i$.
Then for any gain matrix $\mbf{L} = [\l_i\tr]$ which satisfies
\begin{equation}
\label{eq:GainConstraints}
{a}_{i,i} - \l_i\tr \mbf{C}_i + \sum_{j \ne i} \abs{ {a}_{i,j} - \l_i\tr \mbf{C}_j } < 0, \quad \forall i,
\end{equation}
the interval state estimate $[\x^L,\x^U]$ resulting from the No Constraints method is asymptotically exact;
i.e.
$\x^U(t) - \x^L(t) \to \mbf{0}$ as $t \to +\infty$.
\end{theorem}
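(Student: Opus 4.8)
The plan is to track the width $\e(t) := \x^U(t) - \x^L(t)$ of the interval estimate, show it obeys a linear cooperative (Metzler) ODE $\dot{\e} = \mbf{Q}\e$, and then read off from \eqref{eq:GainConstraints} that $\mbf{Q}$ is Hurwitz, so that $\e(t) \to \mbf{0}$.

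First I would specialize the No Constraints dynamics to this setting. Since the measurements are exact, $V(t) = [\mbf{0},\mbf{0}]$, and since $\f(t,\u,\z) = \mbf{A}\z$, Definition~\ref{defn:inclusionfun} gives $\hat{\f}(t,\u,\z,\v;\mbf{L}) = \mbf{P}\z - \mbf{L}\v$ with $\mbf{P} := \mbf{A} - \mbf{LC}$, whose $(i,j)$ entry is $p_{i,j} = a_{i,j} - \l_i\tr\mbf{C}_j$. Because the natural interval extension of a linear form is exact on a box, and because the $\mbf{L}\v$ part contributes $[0,0]$ on the degenerate set $V(t)$, evaluating $\hat f_i^L$ on the lower face $B_i^L(\x^L,\x^U)$ (the box whose $i$th coordinate is pinned to $x_i^L$ and whose $j$th coordinate is $[x_j^L,x_j^U]$ for $j\ne i$) gives
\[
\hat{f}_i^L([t,t],U(t),B_i^L(\x^L(t),\x^U(t)),V(t);\mbf{L}) = p_{i,i}x_i^L(t) + \sum_{j\ne i}\min\{p_{i,j}x_j^L(t),\,p_{i,j}x_j^U(t)\},
\]
and symmetrically $\hat f_i^U$ on $B_i^U$ equals $p_{i,i}x_i^U(t) + \sum_{j\ne i}\max\{p_{i,j}x_j^L(t),\,p_{i,j}x_j^U(t)\}$; this uses $\x^L(t)\le\x^U(t)$ for all $t$, which holds because the No Constraints construction is the classical differential-inequalities method (cf.\ \cite{harrison77}) and hence yields valid enclosures of the true trajectory. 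Subtracting the two equations, the common term $\mbf{L}\y(t)$ cancels, and using $\max\{a,b\}-\min\{a,b\}=\abs{a-b}$ together with $e_j\ge 0$ one gets $\dot{e}_i(t) = p_{i,i}e_i(t) + \sum_{j\ne i}\abs{p_{i,j}}e_j(t)$ for each $i$, i.e.\ $\dot{\e} = \mbf{Q}\e$ with $q_{i,i} = p_{i,i}$, $q_{i,j} = \abs{p_{i,j}}$ for $j\ne i$; this $\mbf{Q}$ is Metzler.

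The second step is to invoke \eqref{eq:GainConstraints}, which states precisely that $(\mbf{Q}\mbf{1})_i = p_{i,i} + \sum_{j\ne i}\abs{p_{i,j}} < 0$ for every $i$, i.e.\ $\mbf{Q}\mbf{1} < \mbf{0}$. Let $\epsilon := -\max_i(\mbf{Q}\mbf{1})_i > 0$, choose $c > 0$ with $\e(t_0) = \x^U(t_0) - \x^L(t_0) \le c\mbf{1}$, and set $\mbs{\phi}(t) := c\,e^{-\epsilon(t-t_0)}\mbf{1}$. Then $\dot{\mbs{\phi}}(t) = -\epsilon\mbs{\phi}(t) \ge c\,e^{-\epsilon(t-t_0)}\mbf{Q}\mbf{1} = \mbf{Q}\mbs{\phi}(t)$, so $\mbs{\phi}$ is a supersolution of the error system; since $\mbf{Q}$ is Metzler (hence quasimonotone) and $\e(t_0) \le \mbs{\phi}(t_0)$, a comparison theorem of the M\"uller/Kamke type gives $\mbf{0} \le \e(t) \le \mbs{\phi}(t)$ for all $t \ge t_0$ (the lower bound because $\mbf{Q}$ Metzler and $\e(t_0) \ge \mbf{0}$ force $\e$ to stay nonnegative). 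Letting $t \to +\infty$ gives $\e(t) \to \mbf{0}$. Equivalently, one can simply note that a Metzler matrix with $\mbf{Q}\mbf{1} < \mbf{0}$ is Hurwitz (so $-\mbf{Q}$ is a nonsingular M-matrix, with $\mbf{1}$ a diagonal-scaling vector), whence every solution of $\dot{\e} = \mbf{Q}\e$ decays to zero.

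The only substantive part is the first step: verifying carefully that evaluating the natural interval extension on the faces $B_i^L$ and $B_i^U$ turns the off-diagonal entries of $\mbf{P}$ into their absolute values in the error dynamics, and keeping the bookkeeping straight (the degenerate $i$th coordinate, the cancellation of $\mbf{L}\y(t)$, the vanishing of the $\mbf{L}\v$ contribution, and the persistence of $\x^L \le \x^U$). Once the error system is identified as Metzler with strictly negative row sums, the stability conclusion is standard. A minor point: Theorem~\ref{thm:method} only posits an absolutely continuous $(\x^L,\x^U)$, but since the induced $\e$-system is linear with constant coefficients its solution is the unique $\e(t) = e^{\mbf{Q}(t-t_0)}\e(t_0)$, so no further existence or regularity considerations are needed.
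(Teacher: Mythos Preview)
Your proof is correct and follows essentially the same route as the paper: you derive the linear error system $\dot{\e} = \mbf{Q}\e$ with $q_{i,i}=p_{i,i}$ and $q_{i,j}=\abs{p_{i,j}}$ (the paper's $\tilde{\mbf{M}}$), and then conclude stability from the strictly negative row sums in~\eqref{eq:GainConstraints}. The only minor difference is the stability argument---the paper invokes Gershgorin's circle theorem to place all eigenvalues in the open left half-plane, whereas you use a comparison/supersolution (equivalently, M-matrix) argument exploiting the Metzler structure; both are standard and yield the same conclusion from the same row-sum hypothesis.
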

\begin{proof}
For any $\mbf{L}$,
let $\mbf{M} = \mbf{A} - \mbf{LC}$
(clearly $\mbf{M}$ depends on $\mbf{L}$ but we will suppress this dependence in the notation).
Let $\mbf{M} = [\mbf{m}_i\tr]$ 
so that the state estimates $(\x^L,\x^U)$ resulting from the No Constraints method satisfy the following differential equations for each $i$ and almost every $t$:
\begin{align}
\dot{x}_i^L(t) &= \mbf{m}_i\tr \mbf{B}_i^L (\x^L(t),\x^U(t)) + \mbf{L}\y(t),\\
\dot{x}_i^U(t) &= \mbf{m}_i\tr \mbf{B}_i^U (\x^L(t),\x^U(t)) + \mbf{L}\y(t).
\end{align}
Here, $\mbf{B}_i^L : \mbb{R}^{2n} \to \mbb{R}^{n}$ is a linear mapping whose effect is the following:
if
$
\z^L = \mbf{B}_i^L (\x^L,\x^U),
$
then
\[
z_j^L = 
\begin{cases}
x_i^L & \text{if }   j = i, \\
x_j^L & \text{if }   j \ne i \text{ and } m_{i,j} \ge 0, \\
x_j^U & \text{else }(j \ne i \text{ and } m_{i,j} <   0).
\end{cases}
\]
Simply, $\mbf{m}_i\tr \mbf{B}_i^L (\x^L(t),\x^U(t))$ is exactly the value of the lower bound of the natural interval extension of $\x \mapsto \mbf{m}_i\tr \x$ on the set $B_i^L(\x^L(t), \x^U(t))$, as we would expect of the No Constraints method.
Similarly, if
$
\z^U = \mbf{B}_i^U (\x^L,\x^U),
$
then
\[
z_j^U = 
\begin{cases}
x_i^U & \text{if }   j = i, \\
x_j^U & \text{if }   j \ne i \text{ and } m_{i,j} \ge 0, \\
x_j^L & \text{else }(j \ne i \text{ and } m_{i,j} <   0),
\end{cases}
\]
and again, $\mbf{m}_i\tr \mbf{B}_i^U (\x^L(t),\x^U(t))$ equals the upper bound of the natural interval extension of $\x \mapsto \mbf{m}_i\tr \x$ on the set $B_i^U(\x^L(t), \x^U(t))$.

Then we have that for each $i$
\[
\dot{x}_i^U(t) - \dot{x}_i^L(t) = m_{i,i}(x_i^U(t) - x_i^L(t)) + \sum_{j \ne i} \abs{m_{i,j}} (x_j^U(t) - x_j^L(t)),
\]  
or in matrix form,
\[
\dot{\x}^U(t) - \dot{\x}^L(t) = \tilde{\mbf{M}}(\x^U(t) - \x^L(t))
\]
where each off-diagonal component of $\tilde{\mbf{M}}$ equals the absolute value of the corresponding component of $\mbf{M}$, and the diagonals of $\tilde{\mbf{M}}$ and $\mbf{M}$ are equal.

If every eigenvalue of $\tilde{\mbf{M}}$ has negative real part, then 
$\x^U(t) - \x^L(t) \to \mbf{0}$ as $t \to +\infty$
(see for instance \cite[Thm.~3.5]{khalil}.
Gershgorin's circle theorem \cite[\S7.4]{strang} provides a way to bound the eigenvalues;
for any eigenvalue $\lambda$, there is some $i$ such that
\[
   \abs{ \lambda - \tilde{m}_{i,i} } \le \sum_{j \ne i} \abs{ \tilde{m}_{i,j} }.
\]
Since each $m_{i,i}$ is real, if we require that for all $i$
\[
\tilde{m}_{i,i} + \sum_{j \ne i} \abs{ \tilde{m}_{i,j} } < 0,
\]
then we can be sure that all eigenvalues of $\tilde{\mbf{M}}$ have negative real part.

Of course, we realize that these conditions simplify to 
${m}_{i,i} + \sum_{j \ne i} \abs{ {m}_{i,j} } < 0$ 
for all $i$.
Finally, recall the definition $\mbf{M} = \mbf{A} - \mbf{LC}$, 
let the rows of $\mbf{L}$ be $\l_i \tr$, and
let the columns of $\mbf{C}$ be $\mbf{C}_i$.
Then these conditions become
\[
{a}_{i,i} - \l_i\tr \mbf{C}_i + \sum_{j \ne i} \abs{ {a}_{i,j} - \l_i\tr \mbf{C}_j } < 0.
\]
\end{proof}


Related results are in the literature, like \cite[Thm.~1]{efimovEA16}.
Stability/asymptotic properties of the interval state estimate are often stated as requiring $\mbf{A} - \mbf{LC}$ to have eigenvalues with negative real part;
it is clear, again using Gershgorin's circle theorem, that conditions~\eqref{eq:GainConstraints}, if satisfied, imply exactly this.
Of course, it is \emph{not} the stability of the modified system 
$\dot{\x} = (\mbf{A} - \mbf{LC})\x + \mbf{L}\y$
that matters -- this is mostly a coincidence;
it is the stability of the dynamics defining $\x^U - \x^L$ that determines their asymptotic properties.

To turn the conditions in Theorem~\ref{thm:LinearStability} into an implementable numerical procedure, the next result states that a gain matrix satisfying the conditions may be found as the solution of a linear program (LP).

\begin{corollary}
\label{cor:GainCalculation}
Let the columns of $\mbf{C}$ be $\mbf{C}_i$.
Consider the LP
\begin{align}
\label{lp:gaincalc}
\min_{\mbf{L}, \mbf{B}, s}\; &s \\
\st
\notag   &\l_i\tr \mbf{C}_j - b_{i,j} \le  a_{i,j}, \quad \forall i, \forall j: j\ne i, \\
\notag  -&\l_i\tr \mbf{C}_j - b_{i,j} \le -a_{i,j}, \quad \forall i, \forall j: j\ne i, \\
\notag  -&\l_i\tr \mbf{C}_i + \smallsum_{j \ne i} b_{i,j} - s \le -a_{i,i}, \quad \forall i,
\end{align}
where the variables are $\mbf{L} = [\l_i\tr]$, $\mbf{B} = [b_{i,j}]$, and $s$.
Any $\mbf{L}$ which is a solution of this LP with optimal objective value $s^* < 0$ satisfies conditions~\eqref{eq:GainConstraints}.
\end{corollary}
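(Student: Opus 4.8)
The plan is to recognize the LP as the standard epigraph-plus-absolute-value linearization of the nonlinear, nonconvex conditions~\eqref{eq:GainConstraints}. The auxiliary variables $b_{i,j}$ are meant to be upper bounds on the quantities $\abs{a_{i,j} - \l_i\tr\mbf{C}_j}$, and the scalar $s$ is meant to be an upper bound on $\max_i \big( a_{i,i} - \l_i\tr\mbf{C}_i + \smallsum_{j\ne i} \abs{a_{i,j}-\l_i\tr\mbf{C}_j} \big)$. Showing the corollary then amounts to checking that feasibility for the LP indeed forces these interpretations to hold as inequalities in the correct direction, so that $s^\ast < 0$ propagates to each condition in~\eqref{eq:GainConstraints}.

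Concretely, I would carry out the following steps. First, for fixed $i$ and $j \ne i$, add/rearrange the first two constraint families: $\l_i\tr\mbf{C}_j - b_{i,j} \le a_{i,j}$ gives $b_{i,j} \ge \l_i\tr\mbf{C}_j - a_{i,j}$, and $-\l_i\tr\mbf{C}_j - b_{i,j} \le -a_{i,j}$ gives $b_{i,j} \ge a_{i,j} - \l_i\tr\mbf{C}_j$; together these yield $b_{i,j} \ge \abs{a_{i,j} - \l_i\tr\mbf{C}_j}$ (and in particular $b_{i,j}\ge 0$). Second, rearrange the third constraint family to $a_{i,i} - \l_i\tr\mbf{C}_i + \smallsum_{j\ne i} b_{i,j} \le s$. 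Third, chain these: substituting the lower bound on each $b_{i,j}$ into the sum gives
\[
a_{i,i} - \l_i\tr\mbf{C}_i + \smallsum_{j\ne i} \abs{a_{i,j} - \l_i\tr\mbf{C}_j} \;\le\; a_{i,i} - \l_i\tr\mbf{C}_i + \smallsum_{j\ne i} b_{i,j} \;\le\; s
\]
for every $i$. Finally, if $(\mbf{L},\mbf{B},s)$ is an optimal solution with $s = s^\ast < 0$, the left-hand side is strictly negative for every $i$, which is exactly~\eqref{eq:GainConstraints}.

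The remaining point worth a sentence is that the LP is always feasible — given any $\mbf{L}$ one may take $b_{i,j} = \abs{a_{i,j} - \l_i\tr\mbf{C}_j}$ and $s$ equal to the maximum over $i$ of the left-hand side above — so the optimal value $s^\ast$ is well defined (possibly $-\infty$), and the hypothesis $s^\ast < 0$ is precisely the decidable condition under which a gain satisfying~\eqref{eq:GainConstraints} has been produced. I do not anticipate a real obstacle here: the argument is a one-directional implication (feasible point of the LP $\Rightarrow$ the inequalities hold), and the only mild subtlety, namely that the bounds $b_{i,j} \ge \abs{a_{i,j}-\l_i\tr\mbf{C}_j}$ are attained at optimality, is not even needed for the stated claim and so can be omitted or relegated to a remark.
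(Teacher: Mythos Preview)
Your proposal is correct and follows essentially the same route as the paper's own proof: derive $b_{i,j} \ge \abs{a_{i,j} - \l_i\tr \mbf{C}_j}$ from the first two constraint families, combine with the third to get the chain $a_{i,i} - \l_i\tr \mbf{C}_i + \smallsum_{j\ne i}\abs{a_{i,j} - \l_i\tr \mbf{C}_j} \le s$, and conclude from $s^\ast < 0$. Your additional remark on feasibility is sound but goes slightly beyond what the paper records.
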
 
\begin{proof}
Note that the first two sets of constraints imply
$b_{i,j} \ge \abs{ a_{i,j} - \l_i\tr \mbf{C}_j}$, 
for all $i \ne j$.
Then, with the last set of constraints, we get
\[
a_{i,i} - \l_i\tr \mbf{C}_i + \smallsum_{j \ne i} \abs{ a_{i,j} - \l_i\tr \mbf{C}_j} \le
a_{i,i} - \l_i\tr \mbf{C}_i + \smallsum_{j \ne i} b_{i,j} \le s,
\]
for each $i$.
Thus, if the optimal $s^*$ is negative, we certainly have that a corresponding solution $\mbf{L}$ satisfies conditions~\eqref{eq:GainConstraints}.
\end{proof}

As a practical note, we would add a (negative) lower bound on $s$ to the above LP in order to prevent the possibility that the LP is unbounded.
Further, the variables $b_{i,i}$ do not appear in the constraints or objective and could be eliminated;
a decent numerical LP solver will likely identify this automatically and eliminate them in a presolve step.

Other results for calculating the gain matrix based on the solution of an LP have been proposed;
see for instance \cite{chebotarevEA15}.
Again, these results rely on something like $\mbf{A} - \mbf{LC}$ being Metzler, although this extra structure permits more detailed claims about the input-output gains of the system.


\section{Examples}
\label{sec:examples}

We evaluate the performance of the proposed estimation method on some examples.
At the heart of the method is the solution of the IVP in ODEs in Theorem~\ref{thm:method}.
This initial value problem is solved with a C/C++ code employing the CVODE component of the SUNDIALS suite \cite{hindmarshEA05}.
Specifically, the numerical method uses the implementation of the Backwards Differentiation Formulae (BDF) in CVODE, using a Newton iteration for the corrector, with relative and absolute integration tolerances equal to $10^{-9}$.
The implementation of interval arithmetic in MC++ \cite{mc++} produces the inclusion function in Definition~\ref{defn:inclusionfun}.
%

The proposed method will be referred to as GMAC, indicating that it incorporates 
\textbf{G}ained \textbf{M}easurement \textbf{A}nd \textbf{C}onstraint 
information when constructing the state estimate.
The No Constraints and No Measurement methods will be compared (recall discussion following Theorem~\ref{thm:method}).

\subsection{Bioreactor}
\label{sec:bioreactor}

We consider an example involving a bioreactor originally from \cite{moisanEA05,meslemEA10}.
The dynamic model describes the evolution in time of the concentrations of biomass and feed substrate.
The dynamic equations on the time domain $T = [0,20]$ (day) are
\begin{align}
\notag
\dot{x}(t) &= \left(\mu_0(t) \frac{s(t)}{s(t) + k_s + s(t)^2/k_i} - \alpha D(t) \right) x(t), & x(0) \in [0,10] \text{ (mmol/L)}, \\
\notag
\dot{s}(t) &= -k \mu_0(t) x(t) \frac{s(t)}{s(t) + k_s + s(t)^2/k_i} + D(t)(s_{in}(t) - s(t)), & s(0) \in [0,100] \text{ (mmol/L)},
\end{align}
where $x(t)$ and $s(t)$ are the biomass and substrate concentrations, respectively, at time $t$, and the known parameters are 
\begin{align*}
&(k_s, k_i) = (9.28, 256) \text{ (mmol/L)}, \\
&(k, \alpha) = (42.14, 0.5), \\
&D(t) = 
\begin{cases}
2     \text{ (day}^{-1}), & \text{if } t \in [0,5],\\
0.5   \text{ (day}^{-1}), & \text{if } t \in (5, 10], \\
1.067 \text{ (day}^{-1}), & \text{if } t \in (10, 20].
\end{cases}
\end{align*}
Meanwhile, the unknown parameters/disturbances are $(\mu_0,s_{in})$ which satisfy for all $t \in T$
\begin{align*}
\mu_0(t)   &\in [0.703, 0.777] \text{ (day}^{-1}), \\
s_{in}(t)  &\in [0.95 \hat{s}_{in}(t), 1.05 \hat{s}_{in}(t)] \text{ (mmol/L)},
\end{align*}
where
$\hat{s}_{in} : t \mapsto 50 + 15 \cos(t/5)$
(so take $U : t \mapsto  [0.703, 0.777] \times [0.95 \hat{s}_{in}(t), 1.05 \hat{s}_{in}(t)]$).

Continuous measurements of $x(t)$, the biomass concentration, are available 
(so $\mbf{C} = [1\; 0]$).
The error%
\footnote{We note that the error in the \emph{initial} measurements of the biomass concentration are much larger than the subsequent measurements.
While in practice it seems likely that the initial measurements should be known at least as accurately as any subsequent online measurements, for consistency we follow this example as it appears in \cite{moisanEA05,meslemEA10}.
}
in these measurements satifies $v(t) \in [-0.25,0.25]$.
For simulation purposes, we obtain measurements from a numerical solution with 
$\mu_0(t) = 0.74$ for all $t$,
$s_{in}(t) = \hat{s}_{in}(t)$ for all $t$, 
$v(t) = 0$ for all $t$, and
initial conditions $x(0) = 5$ and $s(0) = 40$.
These measurements are obtained at 500 equally spaced time points from the interval $T$, and then made continuous by linearly interpolating between them.

In \cite{moisanEA05}, a ``bundle'' of interval observers are constructed, essentially by running independent estimates with different gain matrices $\mbf{L}$.
We choose one of these gain matrices $\mbf{L} = [2\; 0]\tr$.

Figure~\ref{fig:bioreactor} summarizes the results;
the upper and lower bounds of the interval estimate for each state is plotted versus time.
The proposed GMAC method is strictly tighter than either the No Measurements or No Constraints methods for at least one of the states.
The interval estimates at the final time are given in Table~\ref{tab:bioreactor}.

Compared to the bundle of observers from \cite{moisanEA05}, the estimates from GMAC are of comparable quality.
As mentioned, however, we achieve our results with only one observer gain matrix.
The same example is considered in \cite{meslemEA10}, and again we achieve similar state estimates.
The work in  \cite{meslemEA10} is based on a hybrid automata approach, and some extra analysis to determine the switching conditions is required.
Meanwhile, we rely on automatic construction of inclusion functions through implementations of interval arithmetic and little extra analysis is required.

\begin{figure}
\centering
\includegraphics{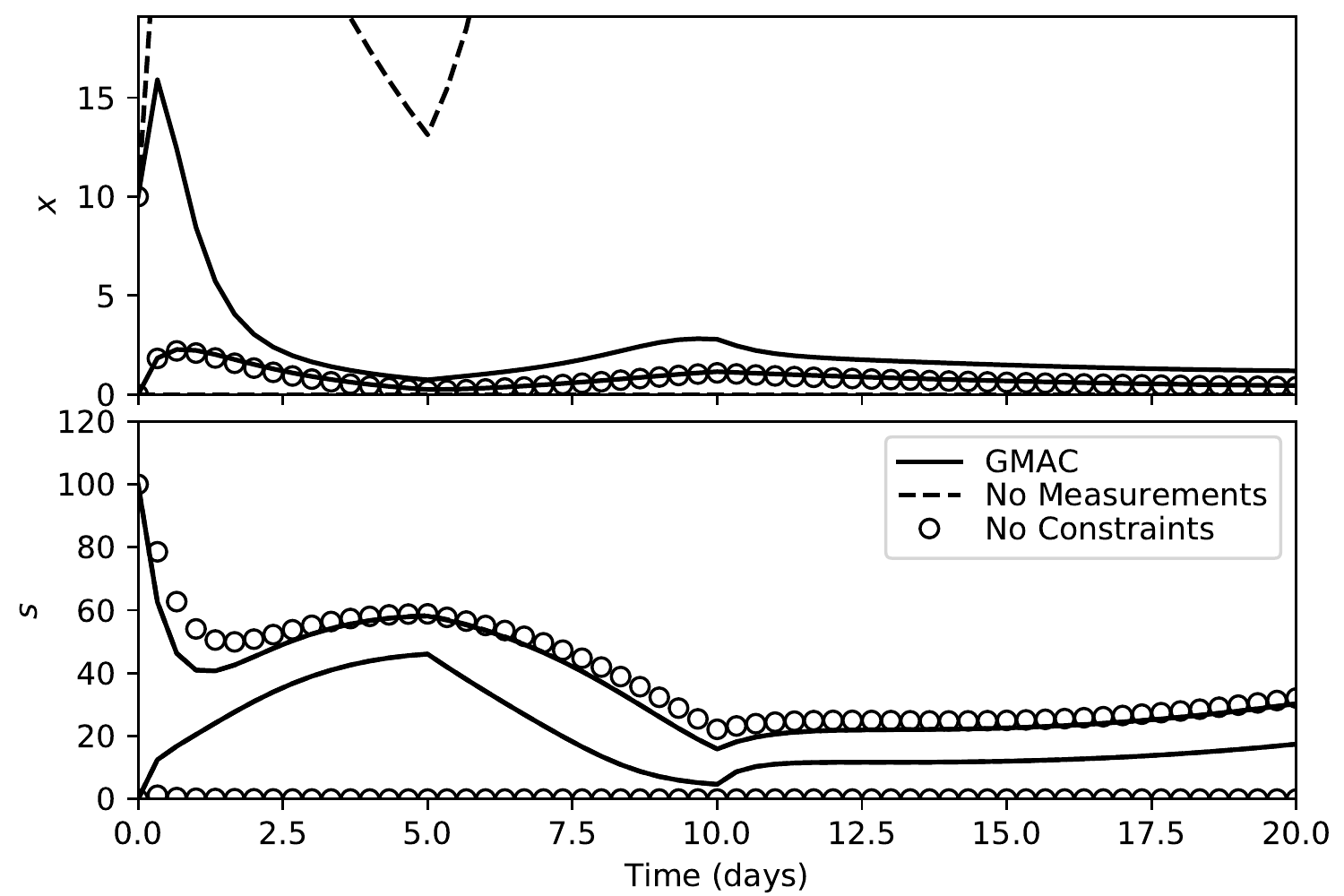}
\caption{State estimates for the bioreactor system from \S\ref{sec:bioreactor}:
biomass (top) and substrate (bottom).
The upper bounds for biomass, $x$, from the No Measurements and No Constraints methods are mostly off the bounds of the figure.
The estimates from the proposed GMAC and No Measurements methods are nearly indistinguishable for the substrate, $s$.}
\label{fig:bioreactor}
\end{figure}

\begin{table}
\caption{State estimates from \S\ref{sec:bioreactor} for various methods at final time: 
$(x(20),s(20)) \in [x^L, x^U] \times [s^L,s^U]$}
\label{tab:bioreactor}
\centering
\begin{tabular}{r | c c}
method            & $[x^L,x^U]$     & $[s^L,s^U]$ \\
\hline
GMAC              & $[0.449,1.19]$    & $[17.4, 30.3]$ \\
No Measurements   & $[0    ,10400]$   & $[17.4, 30.3]$ \\
No Constraints    & $[0.398,314000]$  & $[0   , 32.1]$
\end{tabular}
\end{table}

\subsection{Linearized system}
\label{sec:efimov}

This example comes from Example 1 of \cite{efimovEA16}.
We have a three state system with nonlinear dynamics;
$\x$ satisfies
\begin{equation}
\notag
\dot\x(t) = 
\begin{bmatrix}
2 & 0 & 0\\
1 & -4 & \sqrt{3} \\
-1 & -\sqrt{3} & -4
\end{bmatrix}
\x(t) + 
\begin{bmatrix}
-2 u_1(t) x_1(t) x_2(t) \beta(t) \\
0 \\
   u_2(t) x_1(t) x_2(t) \beta(t) \\
\end{bmatrix},
\quad \alev t,
\end{equation}
where
\[
\beta : t \mapsto 1 + \sin(2t).
\]
The time interval of interest is $T = [0,5]$,
with the state known at $t = 0$: $\x(0) = (1,1,0)$.
The disturbances satisfy $(u_1(t),u_2(t)) \in [4.48,6.12] \times [3.2,3.6]$.
Measurements of the first state are available, so that $\mbf{C} = [1\; 0\; 0]$.
The error in these measurements satisfies $v(t) \in [-0.1, 0.1]$.
For simulation purposes, we obtain measurements from a numerical solution of the system with 
$u_1(t) = 5.3$ for all $t$,
$u_2(t) = 3.4$ for all $t$, and 
$v(t) = 0.1\sin(10t)$.
These measurements are obtained at 500 equally spaced time points from the interval $T$, and then made continuous by linearly interpolating between them.

Although this is a nonlinear system, we attempt to design an observer gain matrix based on the linear part.
The original analysis in \cite[Example 1]{efimovEA16} suggests
\[
\mbf{L} = \mbf{L}_1 \equiv [3\;\; 0\;\; 0]\tr.
\]
However, solving LP~\eqref{lp:gaincalc} yields 
\[
\mbf{L} = \mbf{L}_2 \equiv [4.27\;\; 1\;\; {-}1]\tr,
\]
and a negative optimal objective value is obtained.
We will calculate state estimates with both separately.
Refer to these as ``gain 1'' and ``gain 2,'' respectively.
Recall that the No Measurements method is defined by using $\mbf{L} = \mbf{0}$, and so it remains the same.

Figure~\ref{fig:efimov} summarizes the results;
the upper and lower bounds of the interval estimate for each state is plotted versus time.
For this example, omitting constraints produces a much more conservative interval state estimate.
In fact, the bounds produced by the No Constraints method with gain 1 diverge and the numerical integration procedure prematurely terminates shortly after $t=3.5$
(specifically, the corrector iteration of the BDF in CVODE fails).
Using gain 2, the No Constraints method performs much better and can produce state estimates for the entire time interval.

Meanwhile, the proposed GMAC method (with either gain- the results are largely the same) more consistently produces a tight interval estimate for each state.
The No Measurements method is comparable.
Table~\ref{tab:efimov} lists specific values at the final time.
The main difference is that the estimate of the first state from GMAC is much tighter.
This is somewhat of a moot point, however, as bounded-error measurements of the first state are directly available;
for our simulated measurement values, this implies the interval estimate 
$x_1(5) \in [0.692, 0.892]$.

\begin{figure}
\centering
\includegraphics{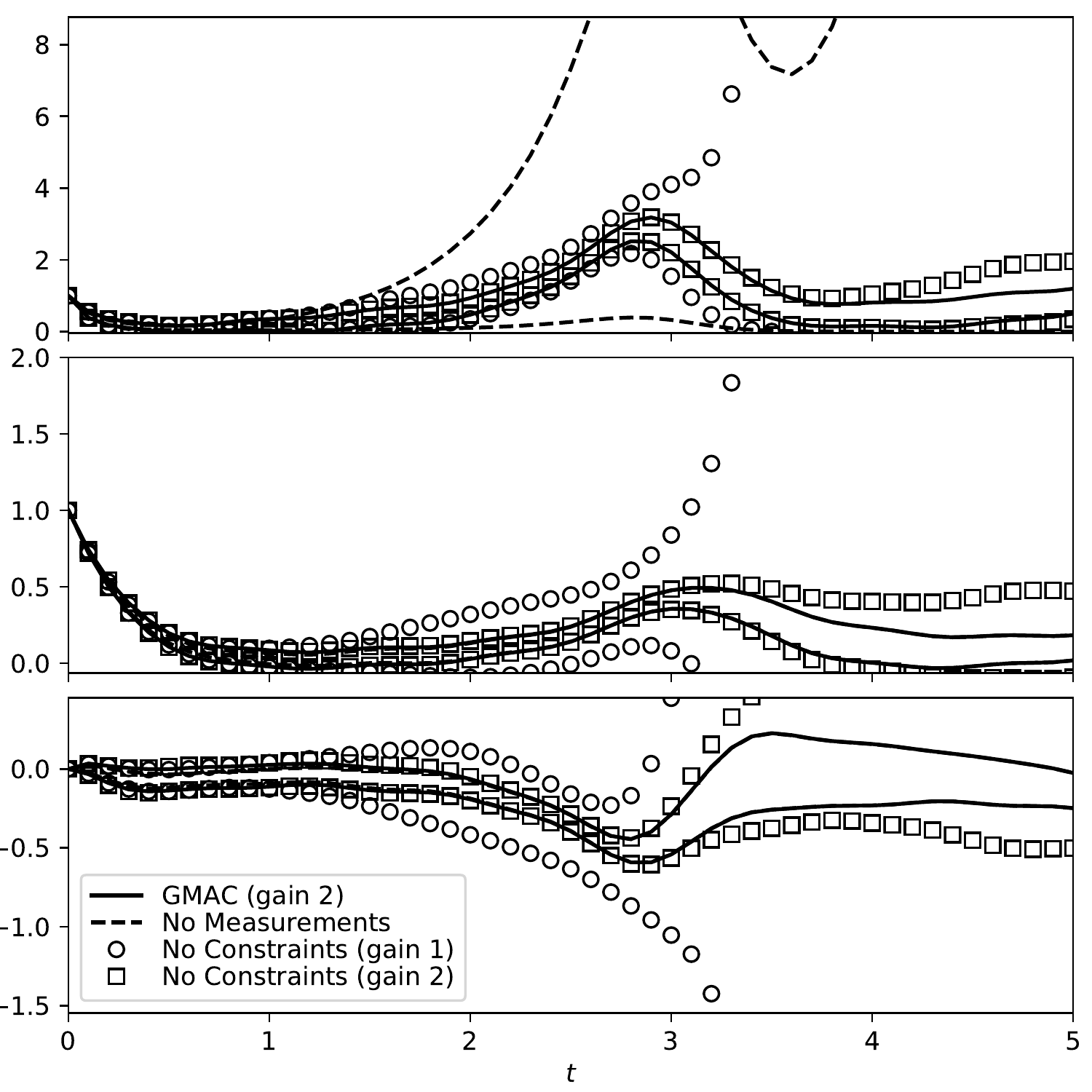}
\caption{Interval state estimates over time for each state in the example from \S\ref{sec:efimov}:
$x_1$ (top), $x_2$ (middle), $x_3$ (bottom).
The estimates from the proposed GMAC method and No Measurements method are nearly indistinguishable for $x_2$ and $x_3$.
``Gain i'' refers to using a gain matrix $\mbf{L} = \mbf{L}_i$ (for $i$ equal to 1 or 2).
GMAC with gain 1 is nearly identical to GMAC with gain 2.}
\label{fig:efimov}
\end{figure}

\begin{table}
\caption{State estimates from \S\ref{sec:efimov} for various methods at final time: 
$\x(5) \in [\x^L, \x^U]$}
\label{tab:efimov}
\centering
\begin{tabular}{r | c c c}
method & $[x_1^L, x_1^U]$ & $[x_2^L, x_2^U]$ & $[x_3^L, x_3^U]$ \\
\hline
GMAC (gain 2)           & $[0.504,1.20]$   & $[0.0178,0.182]$  &$[-0.248,-0.0250]$ \\
No Measurements         & $[0.000852,113]$ & $[0.0179,0.182]$  &$[-0.248,-0.0251]$ \\
No Constraints (gain 2) & $[0.350,1.96]$   & $[-0.0919,0.472]$ &$[-0.502,0.564]$
\end{tabular}
\end{table}

\section{Conclusions}
\label{sec:conc}

This work has considered the problem of bounded-error state estimation for nonlinear systems.
Using recent work for computing the reachable sets of constrained dynamic systems, we proposed a method that uses measurements in a novel way to construct interval state estimates.
Numerical examples demonstrate that the method is effective in practice and can improve on existing approaches.
The proposed approach also benefits from its reliance on interval arithmetic and its implementation in numerical libraries, which reduces the amount of manual analysis of a specific problem one must perfom.
Finally, we proposed a new way of calculating the gain matrix and showed that it can produce asymptotically exact interval estimates in ideal cases.


\begin{thebibliography}{10}

\bibitem{bokanowskiEA10}
Olivier Bokanowski, Nicolas Forcadel, and Hasnaa Zidani.
\newblock {Reachability and minimal times for state constrained nonlinear
  problems without any controllability assumption}.
\newblock {\em SIAM Journal on Control and Optimization}, 48(7):4292--4316,
  2010.

\bibitem{mc++}
Beno\^{\i}t Chachuat.
\newblock {MC++: A Versatile Library for McCormick Relaxations and Taylor
  Models}.
\newblock \url{http://www.imperial.ac.uk/people/b.chachuat/research.html},
  2015.

\bibitem{chebotarevEA15}
Stanislav Chebotarev, Denis Efimov, Tarek Ra{\"\i}ssi, and Ali Zolghadri.
\newblock Interval observers for continuous-time lpv systems with l1/l2
  performance.
\newblock {\em Automatica}, 58:82--89, 2015.

\bibitem{efimovEA16}
Denis Efimov and Tarek Ra{\"\i}ssi.
\newblock Design of interval observers for uncertain dynamical systems.
\newblock {\em Automation and Remote Control}, 77(2):191--225, 2016.

\bibitem{efimovEA13}
Denis Efimov, Tarek Ra\"{\i}ssi, Stanislav Chebotarev, and Ali Zolghadri.
\newblock {Interval state observer for nonlinear time varying systems}.
\newblock {\em Automatica}, 49(1):200--205, 2013.

\bibitem{harrison77}
Gary~W. Harrison.
\newblock {Dynamic models with uncertain parameters}.
\newblock In X~.J.~R. Avula, editor, {\em Proceedings of the First
  International Conference on Mathematical Modeling}, volume~1, pages 295--304.
  University of Missouri Rolla, 1977.

\bibitem{harwoodEA16}
Stuart~M. Harwood and Paul~I. Barton.
\newblock Efficient polyhedral enclosures for the reachable set of nonlinear
  control systems.
\newblock {\em Mathematics of Control, Signals, and Systems}, 28(1):8, 2016.

\bibitem{harwoodEA17}
Stuart~M. Harwood and Paul~I. Barton.
\newblock Affine relaxations for the solutions of constrained parametric
  ordinary differential equations.
\newblock {\em Optimal Control Applications and Methods}, 2017.

\bibitem{harwoodEA15}
Stuart~M. Harwood, Joseph~K Scott, and Paul~I. Barton.
\newblock Bounds on reachable sets using ordinary differential equations with
  linear programs embedded.
\newblock {\em IMA Journal of Mathematical Control and Information},
  33(2):519--541, 2016.

\bibitem{hindmarshEA05}
Alan~C. Hindmarsh, Peter~N. Brown, Keith~E. Grant, Steven~L. Lee, Radu Serban,
  Dan~E. Shumaker, and Carol~S. Woodward.
\newblock {SUNDIALS: Suite of Nonlinear and Differential/Algebraic Equation
  Solvers}.
\newblock {\em ACM Transactions on Mathematical Software}, 31(3):363--396,
  2005.

\bibitem{jaulin02}
L.~Jaulin.
\newblock {Nonlinear bounded-error state estimation of continuous-time
  systems}.
\newblock {\em Automatica}, 38(6):1079--1082, 2002.

\bibitem{kalman60}
R.~E. Kalman.
\newblock {A New Approach to Linear Filtering and Prediction Problems}.
\newblock {\em Journal of Basic Engineering}, 82:35--45, 1960.

\bibitem{khalil}
Hassan~K. Khalil.
\newblock {\em {Nonlinear Systems}}.
\newblock Prentice-Hall, Upper Saddle River, NJ, second edition, 1996.

\bibitem{knuppel94}
O.~Kn{\"u}ppel.
\newblock {PROFIL}/{BIAS} --- {A} fast interval library.
\newblock {\em Computing}, 53(3):277--287, Sep 1994.

\bibitem{kurzhanskiEA06_optimization}
A.~B. Kurzhanski, I.~M. Mitchell, and P.~Varaiya.
\newblock {Optimization techniques for state-constrained control and obstacle
  problems}.
\newblock {\em Journal of Optimization Theory and Applications},
  128(3):499--521, 2006.

\bibitem{kurzhanskiEA06}
A.~B. Kurzhanski and P.~Varaiya.
\newblock {Ellipsoidal techniques for reachability under state constraints}.
\newblock {\em SIAM Journal on Control and Optimization}, 45(4):1369--1394,
  January 2006.

\bibitem{lambert}
J.~D. Lambert.
\newblock {\em {Numerical Methods for Ordinary Differential Systems: The
  Initial Value Problem}}.
\newblock John Wiley \& Sons, New York, 1991.

\bibitem{meslemEA11}
N.~Meslem and N.~Ramdani.
\newblock {Interval observer design based on nonlinear hybridization and
  practical stability analysis}.
\newblock {\em International Journal of Adaptive Control and Signal
  Processing}, 25:228--248, 2011.

\bibitem{meslemEA10}
N.~Meslem, N.~Ramdani, and Y.~Candau.
\newblock {Using hybrid automata for set-membership state estimation with
  uncertain nonlinear continuous-time systems}.
\newblock {\em Journal of Process Control}, 20:481--489, 2010.

\bibitem{moisanEA05}
M.~Moisan and O.~Bernard.
\newblock {Interval observers for non-montone systems. Application to
  bioprocess models}.
\newblock {\em 16th IFAC World Congress}, 16:43--48, 2005.

\bibitem{moisanEA07}
M.~Moisan and O.~Bernard.
\newblock {An interval observer for non-monotone systems: application to an
  industrial anaerobic digestion process}.
\newblock {\em 10th International IFAC Symposium on Computer Applications in
  Biotechnology}, 1:325--330, 2007.

\bibitem{moore_etal}
Ramon~E. Moore, R.~Baker Kearfott, and Michael~J. Cloud.
\newblock {\em {Introduction to Interval Analysis}}.
\newblock SIAM, Philadelphia, 2009.

\bibitem{rump99}
Siegfried~M. Rump.
\newblock {INTLAB} --- {INT}erval {LAB}oratory.
\newblock In Tibor Csendes, editor, {\em Developments in Reliable Computing},
  pages 77--104. Springer Netherlands, Dordrecht, 1999.

\bibitem{scottEA13}
Joseph~K. Scott and Paul~I. Barton.
\newblock {Bounds on the reachable sets of nonlinear control systems}.
\newblock {\em Automatica}, 49(1):93--100, 2013.

\bibitem{strang}
Gilbert Strang.
\newblock {\em {Linear Algebra and its Applications}}.
\newblock Thomson Brooks/Cole, fourth edition, 2006.

\bibitem{thabetEA14}
Rihab El~Houda Thabet, Tarek Ra\"{\i}ssi, Christophe Combastel, Denis Efimov,
  and Ali Zolghadri.
\newblock {An effective method to interval observer design for time-varying
  systems}.
\newblock {\em Automatica}, 50(10):2677--2684, 2014.

\bibitem{walter}
Wolfgang Walter.
\newblock {\em {Differential and Integral Inequalities}}.
\newblock Springer, New York, 1970.

\end{thebibliography}
\end{document}